\newcommand{\tmaffiliation}[1]{\\ #1}
\newcommand{\tmcolor}[2]{{\color{#1}{#2}}}
\newcommand{\tmemail}[1]{\\ \textit{Email:} \texttt{#1}}
\newcommand{\tmop}[1]{\ensuremath{\operatorname{#1}}}
\newcommand{\tmstrong}[1]{\textbf{#1}}
\newcommand{\tmtextit}[1]{\text{{\itshape{#1}}}}
\newcommand{\tmtextsc}[1]{\text{{\scshape{#1}}}}
\newcommand{\tmtextup}[1]{\text{{\upshape{#1}}}}
\newenvironment{enumeratenumeric}{\begin{enumerate}[1.] }{\end{enumerate}}
\newenvironment{enumerateroman}{\begin{enumerate}[i.] }{\end{enumerate}}
\newenvironment{itemizedot}{\begin{itemize} }{\end{itemize}}
\newenvironment{proof}{\noindent\textbf{Proof\ }}{\hspace*{\fill}$\Box$\medskip}
\newtheorem{theorem}{Theorem}
\begin{document}

\title{A visualisation for conveying the dynamics of iterative eigenvalue
algorithms over PSD matrices}

\author{
  Ran Gutin
  \tmaffiliation{Department of Computer Science, Imperial College London}
  \tmemail{jkabrg@gmail.com, rg120@ic.ac.uk}
}

\maketitle

\begin{abstract}
  We propose a new way of visualising the dynamics of iterative eigenvalue
  algorithms such as the QR algorithm, over the important special case of PSD
  (positive semi-definite) matrices. Many subtle and important properties of
  such algorithms are easily found this way. We believe that this may have
  pedagogical value to both students and researchers of numerical linear
  algebra. The fixed points of iterative algorithms are obtained visually, and
  their stability is analysed intuitively. It becomes clear that what it means
  for an iterative eigenvalue algorithm to ``converge quickly'' is an
  ambiguous question, depending on whether eigenvalues or eigenvectors are
  being sought. The presentation is likely a novel one, and using it, a
  theorem about the dynamics of general iterative eigenvalue algorithms is
  proved. There is an accompanying video series, currently hosted on Youtube,
  that has certain advantages in terms of fully exploiting the interactivity
  of the visualisation.
\end{abstract}

\section{Simple iterative eigenvalue algorithms}

The (naive) QR algorithm {\cite{francis1961qr,wilkinson1968global,GoluVanl96}}
evaluated on matrix $M$ begins with setting $M_0 = M$ and then repeating the
following two steps until convergence:
\begin{enumeratenumeric}
  \item Find the QR decomposition $M_n = Q_n R_n$.
  
  \item Set $\text{} M_{n + 1} = R_n Q_n$.
\end{enumeratenumeric}
The thing to note is that $M_{n + 1} = Q_n^T M_n Q_n$. We only concern
ourselves with PSD matrices\footnote{ A PSD (positive semi-definite) matrix is
a symmetric $\mathbb{R}$-matrix whose eigenvalues are all non-negative.
Equivalently, it is a matrix $M$ for which $v^T M v \geq 0$ for all vectors
$v$.}, so the fixed points of the iteration above are all diagonal matrices
with non-negative real entries.

We've also investigated a variant of the LR algorithm evaluated on a PSD
matrix $M$ that begins with setting $M_0 = M$ and then repeating:
\begin{enumeratenumeric}
  \item Find the Cholesky decomposition $M_n = L_n L_n^T$.
  
  \item Set $M_{n + 1} = L_n^T L_n$.
\end{enumeratenumeric}
Observe that all $M_n$ are similar to each other because $M_{n + 1} = L_n^{-
1} M_n L_n$. Each $M_n$ is also PSD. Therefore by the spectral theorem, the
stronger fact follows that all $M_n$ are orthogonally similar to each other.

Note that while there are more modern and sophisticated versions of the QR
algorithm {\cite{GoluVanl96}}, the original one is still widely taught, and
remains simpler than its more recent improvements. We think there are
conceptual advantages to better understanding the original one.

\section{Account of visualisation}

In this paper, we present a visualisation of the QR algorithm. Our
visualisation makes many properties of the naive QR algorithm, and its various
improvements, intuitive. Our visualisation may be beneficial to students and
researchers. Our visualisation is also qualitative, and using it we can show
that many features of the QR algorithm are in fact \tmtextit{universal} to
iterative eigenvalue algorithms, and not just specific to the QR algorithm.

The QR algorithm is an iterative eigenvalue algorithm. By iterative, we mean
that it employs a function $f$ from a set to itself, and evaluates it
repeatedly on some starting value $x$ to produce the sequence: $x, f (x), f (f
(x)), f (f (f (x))) \ldots$ until it converges close enough to a fixed point
of $f$. It's easy to verify that for the QR algorithm the fixed points are
matrices whose eigenvalues are easily found. The issue is that in general,
function iterations have very complicated dynamics. For some functions $f$ and
starting values $x$, the iteration can diverge, or even be chaotic. When
working over $\mathbb{R}$, the usual visualisation that's employed is
sometimes called the cobweb plot, and using it one can develop a good intution
for the dynamics of some instances of function iteration. We cannot use cobweb
plots to understand the dynamics of iterative \tmtextit{eigenvalue} algorithms
however, because even in the smallest non-trivial case, the $2 \times 2$ case,
the function $f$ maps $\mathbb{R}^4 \rightarrow \mathbb{R}^4$.

We focus only on PSD matrices (positive semi-definite). It can be argued that
this special case is sufficient for computing SVDs (Singular Value
Decompositions), eigendecompositions of \tmtextit{symmetric matrices}, and
eigendecompositions of orthogonal matrices.\footnote{Briefly: For symmetric
matrices, one can perform a shift $M' = M + \mu I$ for some easily chosen $\mu
\in \mathbb{R}$ which makes $M'$ PSD. For an orthogonal matrix $M$, we can use
the matrix logarithm (or Cayley Transform) as a first step towards making a
matrix $M'$ which is PSD. SVD is reducible to eigendecomposition of symmetric
matrices in various ways.} The PSD case is especially amenable to
visualisation. We won't try to justify further our focus on the PSD case.

The visualisation is based on the one-to-one correspondence between $n \times
n$ PSD matrices, and ellipsoids centred at the origin of $\mathbb{R^n}$. The
correspondence is given by $M \mapsto \{ M v \mid v \in S^n \}$ where $S^n$ is
the origin-centred hypersphere in $\mathbb{R^n}$. Every PSD matrix corresponds
to a unique ellipsoid, and every ellipsoid (that is origin-centred)
corresponds to a unique PSD matrices. We therefore sometimes talk about
ellipsoids and their semi-axes, instead of about matrices and their
eigen-values/vectors. We go as far as to say ``the ellipsoid'' instead of
``the PSD matrix'' sometimes.\footnote{ This all follows from the spectral
theorem.}

\begin{table}[h]
  \begin{tabular}{lll}
    \raisebox{0.0\height}{\includegraphics[width=3.14836678473042cm,height=2.94721566312475cm]{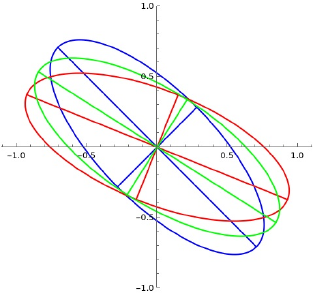}}
    &
    \raisebox{0.0\height}{\includegraphics[width=3.14836678473042cm,height=2.94721566312475cm]{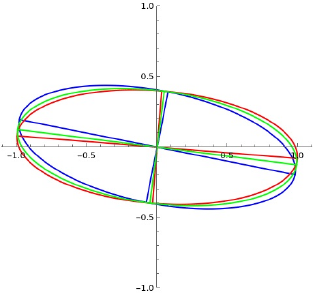}}
    &
    \raisebox{0.0\height}{\includegraphics[width=3.14836678473042cm,height=2.94721566312475cm]{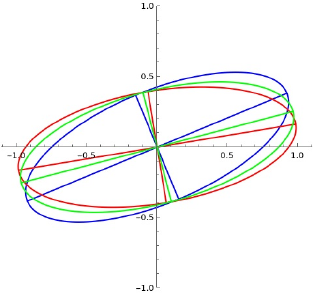}}\\
    \raisebox{0.0\height}{\includegraphics[width=3.14836678473042cm,height=2.94721566312475cm]{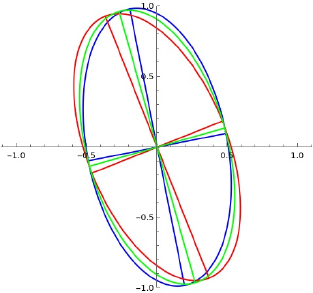}}
    &
    \raisebox{0.0\height}{\includegraphics[width=3.14836678473042cm,height=2.94721566312475cm]{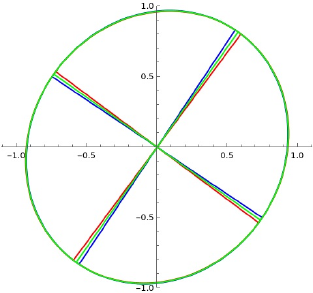}}
    &
    \raisebox{0.0\height}{\includegraphics[width=3.14836678473042cm,height=2.94721566312475cm]{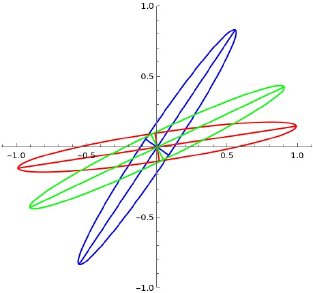}}
  \end{tabular}\tmcolor{blue}{\tmcolor{black}{{\begin{tabular}{l}
    \tmcolor{blue}{Input PSD matrix}\\
    \tmcolor{red}{1 iteration QR}\\
    {\color[HTML]{008000}1 iteration LR}
  \end{tabular}{}}}}
  
  \
  \caption{}
\end{table}

Our \tmcolor{blue}{input ellipse is in blue}. Our \tmcolor{red}{output ellipse
for 1 iteration is in red}. We also visualise
\tmcolor{green}{{\color[HTML]{008000}the LR algorithm in green} }for
comparison. We visualise the naive QR algorithm.

We make some immediate observations:
\begin{enumerate}
  \item The algorithm always converges for PSD matrices.
  
  \item In every iteration, the angle between the large semi-axes and the
  $x$-axis diminishes.
  
  \item The large semi-axis is in the same pair of quadrants for the output
  ellipse as for the input ellipse.
  
  \item The fixed points only occur when the semi-axes are aligned with the
  coordinate axes. For a given ellipse, this can happen two ways.
  
  \item If the large semi-axis is aligned with the $x$-axis, then this fixed
  point is stable.
  
  \item If the large semi-axis is aligned with the $y$-axis, then this fixed
  point is unstable. Being close to this fixed point is undesirable, because 1
  iteration moves you \tmtextit{away} from it, into the direction of the other
  fixed point. The closer you are to the unstable fixed point, the more slowly
  you move away from it.
  
  \item The rotation of the ellipse gets slower the closer it is to being a
  circle. This presents unsolvable difficulties for finding
  \tmtextit{eigenvectors} in general as opposed to \tmtextit{eigenvalues}.
  
  \item Opposite to point 7, the rotation of the ellipse gets faster the
  closer it is to being a degenerate line segment. This can be harnessed to
  speed up the algorithm.
  
  \item If the ellipse $M$ gets scaled to $M' = \lambda M$, then the angle of
  rotation remains the same.
  
  \item The input ellipse is congruent (in the sense of Euclidean geometry and
  not of linear algebra) to the output ellipse.
\end{enumerate}
Observation 7 has profound origins. It is a consequence of eigenvector
instability. When the geometric multiplicity of some eigenvalue $\lambda$ of a
matrix $M$ is 2 or more, then an infinitesimal perturbation of $M$ can
violently change the eigenspaces. Therefore, the slowness of rotation in the
near-circular case is a manifestation of an \tmtextit{unsolvable difficulty}
which is intrinsic to the eigendecomposition problem. Changing to a different
algorithm won't make it go away.

Further discussion of observation 7 requires us to know what a
\tmtextit{nearly diagonal matrix} looks like in the ellipse model:
\begin{enumerateroman}
  \item A \tmtextit{near-circle}.
  
  \item An ellipse whose semi-axes are nearly coordinate-axis aligned.
\end{enumerateroman}
These two scenarios (i and ii) have little overlap. We can find
\tmtextit{eigenvalues} in both scenarios (because of the Gershgorin circle
theorem), but \tmtextit{not eigenvectors}, for which we require scenario ii.
The eigenvector problem is uncomputable in general.

This uncomputability is perhaps not a ``big deal''. It often suffices to find
the eigendecomposition of a matrix close by to the input matrix, even though
this may \tmtextup{violently} change the eigenspaces. A discussion of why this
can ever be acceptable is left to a footnote.\footnote{ To understand why, we
recall the distinction between \tmtextit{forwards numerical stability} and
\tmtextit{backwards numerical stability}, and the difference in applications
of each. An algorithm $\widehat{f}$ for computing a function $f$ is
forwards-stable if $\widehat{f} (x) \approx f (x)$ for all $x$. An algorithm
$\widehat{f}$ for computing a function $f$ is backwards stable if given $x$
there is an $\widehat{x} \approx x$ such that $\widehat{f} (x) \approx f
(\widehat{x})$. Clearly, forwards stability is desirable, but we've shown that
for the eigendecomposition problem it is unattainable. Backwards stability
remains attainable, but what's the point of it? We explain. Let $f:  R
{\rightarrow} R$  be some continuous function. Eigendecomposition enables us to
extend $f$ to the set of PSD matrices by assuming that $f (P D P^{- 1}) = P f
(D) P^{- 1}$ is true. We can only compute eigendecompositions in a backwards
stable way, which means that we can make $M \approx P D P^{- 1}$ true but not
$M = P D P^{- 1}$. This is fine though, because by the continuity of $f$ we
have that $f (M) \approx P f (D) P^{- 1}$, so the approximation error from
computing $f$ via the eigendecomposition of $M$ can be made negligible. Be
careful though, because while $D$ may be close to the eigenvalue matrix of $M$
(because eigenspectra vary continuously) the same is \tmtextit{not} true for
$P$, which may be far from any true eigenvector matrix of $M$.}

The fix for observation 6 is to make the function being iterated
discontinuous. This is obvious in 2 dimensions thanks to the visualisation,
but it is also true in all dimensions. A more general version of this is the
statement of theorem \ref{must-be-discontinuous}. A continuous choice of
function $f$ might be nicer, but continuity can't hold
everywhere.\footnote{For the sake of conceptual understanding, the function
$f$ may be engineered to be continuous over a subspace of its domain over
which it has a globally attractive fixed point, while the discontinuities
outside of this subspace may serve to ``kick'' the input into the well-behaved
subspace. The Wilkinson shift happens to be discontinuous, so it provides the
needed discontinuities.}

Observation 8 appears to be a (kind of) converse to observation 7, but it's
difficult to verify under what conditions an iterative eigenvalue algorithm
behaves this way. In the case of the QR and LR algorithms, this behaviour may
be directly verified, but it would be interesting to demonstrate that it is
true for a large class of other algorithms.

\begin{table}[h]
  \begin{tabular}{lll}
    \raisebox{0.0\height}{\includegraphics[width=3.14836678473042cm,height=3.1920831693559cm]{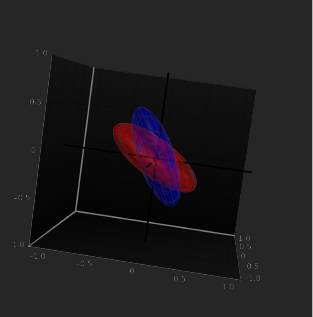}}
    &
    \raisebox{0.0\height}{\includegraphics[width=3.14836678473042cm,height=3.1920831693559cm]{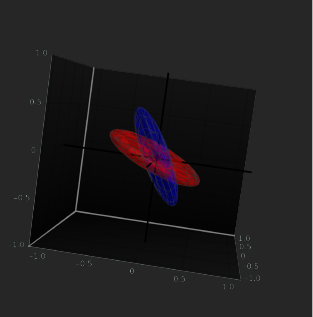}}
    &
    \raisebox{0.0\height}{\includegraphics[width=3.14836678473042cm,height=3.1920831693559cm]{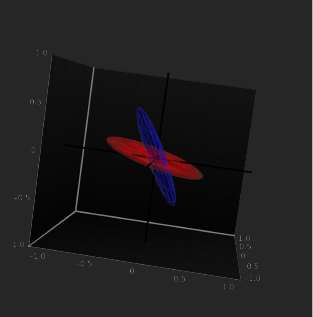}}
  \end{tabular}{\begin{tabular}{l}
    \tmcolor{blue}{Input PSD matrix}\\
    \tmcolor{red}{1 iteration QR}
  \end{tabular}{}}
  \caption{$3 \times 3$ PSD matrices. Observe that when pancaked, the
  ellipsoid aligns 1 semi-axis in 1 iteration.}
\end{table}

Observation 8 can be combined with observation 9 to motivate
\tmtextit{shifting}, and to demonstrate the limits of shifting and how they
may be bypassed through the use of \tmtextit{deflation}. Shifting is the map
$M' = M + \mu I$ for some arbitrary $\mu \in \mathbb{R}$. The effect this has
is to increase the lengths of all the semi-axes by $\mu$. To better understand
this effect, we exploit observation 9 to scale down the ellipse so that its
largest semi-axis has length exactly equal to $1$. We then observe that the
effect of shifting is that it either fattens or thins the ellipse. The fattest
case is a perfect circle, which by observation 7 is the worst-case scenario.
The thinnest case is a line segment, for which convergence is instant. In
higher dimensions, the extreme cases are \tmtextit{flat pancakes} or perfect
hyperspheres. Therefore, shifting is pancaking! Pancakes are, of course, the
best; and blobs are, of course, the worst. But once you've got a perfect
pancake, shifting runs out of steam as a speeding-up technique, so we may
employ \tmtextit{deflation} to destroy the dimension of the ellipsoid that has
been reduced to $0$, so that we may pancake the remaining dimensions, and
rekindle the speed-up.

The above analysis of observation 8 shows that iterative eigenvalue algorithms
in which observation 8 holds can be used as improvers of cruder but faster
eigenvalue estimation technique. If an eigenvalue estimation technique can
crudely estimate the \tmtextit{smallest} eigenvalue, then this can be used as
the value of the shift $\mu$, which can be used then to pancake the ellipsoid
effectively, and which in turn will accelerate the iterative eigenvalue
algorithm, and which may \tmtextit{in turn} make the crude estimation
technique more accurate in subsequent iterations, and so on.

\section{Account of generalisation to $n$ dimensions}

It may not be obvious that the conclusions generalise straightforwardly to PSD
matrices in $n$ dimensions, but they do:
\begin{itemizedot}
  \item The algorithm converges for all PSD matrices.
  
  \item The stable fixed points are precisely the diagonal matrices of the
  form $\tmop{diag} (\lambda_n, \lambda_{n - 1}, \ldots, \lambda_1)$ where all
  the $\lambda_i$'s are non-negative reals, and $\lambda_n \geq \lambda_{n -
  1} \geq \ldots \geq \lambda_1$. The other diagonal matrices are unstable
  fixed points, with convergence slowing down the smaller the angle an
  ellipsoid makes with them.
  
  \item The output of each iteration is orthogonally similar (as a matrix) to
  its input. In terms of Euclidean geometry, it is a congruent ellipsoid. (Be
  aware that ``congruence'' in Euclidean geometry means something different
  from in linear algebra).
  
  \item At eigenvalue clashes, the eigenvectors are unstable, but the
  algorithm may still converge quickly to a nearly diagonal matrix. In which
  case, only the eigenvalues and \tmtextit{not the eigenvectors} may be
  obtained under such circumstances.
  
  \item Shifting can be understood as ``pancaking'' the ellipsoid (once the
  semi-axes are scaled so that the largest semi-axis has length $1$). Like in
  the 2D case, this speeds up convergence.
  
  \item Deflation can be thought of as continuing the ``pancaking'' (shifting)
  once one of the semi-axes has been fully pancaked.
\end{itemizedot}
These conclusions can be checked by doing the ellipsoid visualisation in 3
dimensions. But this isn't necessary, as the 2D case is suggestive enough.

\section{Remark about the dual purpose of Wilkinson shifts, and the search for
better shifting strategies}

Wilkinson shifts are curiously enough not continuous. Wilkinson shifts appear
to both ``kick'' the ellipsoid away from the unstable fixed point (if it's too
close to it) and to ``pancake'' the ellipsoid. Both of these features speed up
convergence. One might wonder though if these two properties: The kicking away
from the unstable fixed point, and the pancaking of the ellipsoid, can be
achieved separately. In particular, the shift $\mu$ can perhaps be made into a
continuous function, with it serving only to ``pancake'' the ellipsoid. This
might simplify the search for superior shifting strategies if we can narrow
the search down to only continuous functions, with the discontinuity needed to
do the ``kicking'' from the fixed point coming from elsewhere.

\section{Axiomatic development of the theory of iterative eigenvalue
algorithms}

One advantage of the visualisation is that it is mostly qualitative. In
particular, we see that the LR algorithm exhibits the same qualitative
behaviour as the QR algorithm. We exploit this to show that much of the
behaviour and theory of the QR algorithm is not caused by the QR decomposition
as such, but is intrinsic to solving the eigenvalue estimation problem by the
use of fixed-point iteration.

Consider the following axioms for a fixed-point iteration algorithm:
\begin{enumeratenumeric}
  \item It consists of iterating some function $f$ over PSD matrices.
  
  \item $f (M)$ is orthogonally similar to $M$.
  
  \item The sequence $(f^n (M))_{n \in \mathbb{N}}$ converges for every $M$.
  It furthermore converges to a fixed point of $f$.
  
  \item A fixed point can only be a diagonal matrix.
  
  \item All fixed points are attractive. (This makes convergence fast).
\end{enumeratenumeric}
We consider these 5 axioms to be highly desirable. Intriguingly, these rule
out the possibility that $f$ can be continuous everywhere. This is the
statement of theorem \ref{must-be-discontinuous}. Morally, the theorem holds
because given a non-scalar matrix $M$, the topological space of matrices
orthogonally similar to $M$ is connected, but has some ``holes'' in it. The
influence these ``holes'' have is that they prevent axioms 1 to 5 being
realised when $f$ is continuous, even though these 5 conditions are highly
desirable. A work-around might be to make $f$ continuous over some
neighbourhood of each fixed point, and use the discontinuity of $f$ outside
these neighbours to make $f (x)$ only map into these neighbourhoods.

\begin{theorem}
  \label{must-be-discontinuous}Assuming axioms 1 to 5, the function $f$ being
  iterated must have discontinuities in the set of PSD matrices.
\end{theorem}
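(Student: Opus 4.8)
My plan is a proof by contradiction that uses essentially only the connectedness of the cone $\mathcal{P}$ of $n \times n$ PSD matrices. (Here $n \ge 2$ is needed: for $n = 1$, Axiom 2 forces $f$ to be the identity, which is continuous, so the statement genuinely fails in that case, and this hypothesis should be stated.) So I would suppose $f : \mathcal{P} \to \mathcal{P}$ is continuous everywhere, and then exhibit a partition of $\mathcal{P}$ into two disjoint non-empty open sets, which contradicts the fact that $\mathcal{P}$, being a convex subset of the space of symmetric matrices, is connected.

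The engine of the argument is the basin of attraction. For each fixed point $p$ of $f$ set $B(p) = \{M \in \mathcal{P} : f^k(M) \to p \text{ as } k \to \infty\}$. By Axiom 3 every orbit converges to a fixed point, so the basins cover $\mathcal{P}$; and since limits in a metric space are unique, distinct basins are disjoint. The crucial claim is that each $B(p)$ is open, and this is the one step where both Axiom 5 and the continuity of $f$ are genuinely used: attractiveness of $p$ supplies an open set $U \ni p$ all of whose orbits converge to $p$; if $M \in B(p)$ then $f^N(M) \in U$ for some $N$, and since $f^N$ is continuous (a composition of continuous maps) the set $(f^N)^{-1}(U)$ is an open neighbourhood of $M$ contained in $B(p)$.

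It remains to produce at least two non-empty basins, and here Axiom 2 enters. Iterating Axiom 2, each $f^k(M)$ is orthogonally similar to $M$, hence has the same characteristic polynomial; letting $k \to \infty$ and using that the characteristic polynomial depends continuously on the matrix, the fixed point $p_M := \lim_k f^k(M)$ has the same eigenvalues as $M$. Consequently PSD matrices with different spectra (which exist as soon as $n \ge 2$) lie in different basins, each non-empty since it contains the corresponding $M$. Choosing one such basin $B_0$ and letting $V$ be the union of all the others (non-empty, as at least two basins are non-empty), we get $\mathcal{P} = B_0 \sqcup V$, a disconnection of the connected space $\mathcal{P}$, which is absurd. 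Hence $f$ must be discontinuous somewhere in $\mathcal{P}$.

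The only genuinely delicate step is the openness of the basins; everything else (disjointness, covering, the spectrum bookkeeping, convexity of the cone) is routine. I would remark that this argument never uses Axiom 4, and that the ``holes'' in the isospectral orbits alluded to in the preceding discussion play no role in it; they are bypassed by the blunt fact that $\mathcal{P}$ itself is connected. A proof localised instead to a single isospectral orbit $\mathcal{O}_M$ of a non-scalar $M$ (which is connected and which $f$ preserves, by Axiom 2) would have to exclude the scenario in which $f$ has exactly one fixed point on $\mathcal{O}_M$ whose basin is all of $\mathcal{O}_M$; ruling that out, together with matching up the attractors across orbits as eigenvalues coalesce, is exactly where the finer topology of $\mathcal{O}_M$ (for instance $\mathcal{O}_M \cong S^1$ in the $2 \times 2$ case with distinct eigenvalues) would have to be invoked.
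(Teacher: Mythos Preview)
Your argument is correct and takes a genuinely different route from the paper's. The paper restricts $f$ to a single isospectral orbit $\mathcal{O}_M$ (for $M$ non-scalar), first arguing that a continuous $f|_{\mathcal{O}_M}$ must have at least two fixed points---because a unique globally attractive fixed point would force $\mathcal{O}_M$ to be contractible, which it is not---and then runs the same open-basins-versus-connectedness contradiction on $\mathcal{O}_M$. You instead work globally on the convex cone $\mathcal{P}$ and manufacture the second fixed point for free from Axiom~2 via the spectrum: matrices with different spectra must land in different basins. This is more elementary: it uses only the connectedness of $\mathcal{P}$, never touches Axiom~4, and sidesteps the orbit topology entirely. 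In fact your final paragraph puts its finger exactly on the delicate point of the orbit-local approach: one must exclude the scenario of a single fixed point with full basin, and the implication ``continuous self-map with a unique globally attractive fixed point $\Rightarrow$ space contractible'' is not true without further hypotheses (e.g.\ $e^{i\theta}\mapsto e^{i\sin(\theta)/2}$ on $S^1\cong\mathcal{O}_M$ in the $2\times 2$ case has $1$ as its unique, globally attractive fixed point). So your global argument is not just different but more robust. One small quibble: your $n=1$ aside is off---for $1\times 1$ matrices Axiom~2 forces $f=\mathrm{id}$, but then no fixed point is attractive, so Axiom~5 already fails and the theorem is vacuously true rather than false.
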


\begin{proof}
  We restrict the domain and codomain of $f$ to the set of matrices
  orthogonally similar to some PSD matrix $M$, where $M$ is not a multiple of
  the identity matrix. The fact that we can do this follows from condition 2.
  {\tmstrong{We show that $f$ has at least two fixed points under
  continuity}}: Assume it only has one fixed point $x$. It must be attractive
  by condition 5. By condition 3, all points attract to $x$. This results in
  the space of matrices orthogonally similar to $M$ being a contractible
  space, which it isn't. We get a contradiction. Therefore $f$ has at least
  two fixed points orthogonally similar to $M$.
  
  {\tmstrong{Now we show that under continuity, one of these two fixed points
  is not attractive, which contradicts conditions 3 and 5}}. The space of
  matrices orthogonally similar to some arbitrary matrix $M$ is connected. The
  set of points which attract to some attractive fixed point is always open.
  The basins of attraction of the attractive fixed points are disjoint, so by
  topological connectivity there are points not in their union, which are the
  points which don't attract to an attractive fixed point. This contradicts
  conditions 3 and 5.
\end{proof}

\section{Future work regarding visualisations}

We ask the following questions. Forgive us if some happen to be easy:
\begin{itemize}
  \item Can a visualisation technique for iterative eigenvalue algorithms be
  developed for non-symmetric matrices? The dynamics of the QR algorithm in
  the case of non-symmetric matrices (especially featuring shift policies) is
  not as well understood as in the symmetric case {\cite{banks2021global}}.
  
  \item Can a visualisation technique help to better understand Hessenberg
  matrices and their uses for finding eigenvalues and eigenvectors? We haven't
  succeeded yet in producing anything useful for this. Even symmetric
  Hessenberg matrices would be interesting.
\end{itemize}
It's important that visualisations be legible. Whether a visualisation is
legible or not is perhaps subjective.

\section{Novelty}

We first put a visualisation of the QR algorithm onto the Wikipedia page on
the QR algorithm on August 2021. We did so under a pseudonym. Afterwards, we
left it for some time. We then later presented some of the material in this
paper in a Youtube video series {\cite{video}}, also under a pseudonym. Based
on a private correspondence with Professor David Watkins {\cite{david-email}}
(who has written numerous pedagogical papers on iterative eigenvalue
algorithms like the QR algorithm {\cite{watkins-tutorial2,watkins-tutorial1}})
the presentation is likely a novel one. The emphasis is likely novel. Videos
and lectures are an important accompaniment when trying to present material
like the one here.

\end{document}